\title[Sufficient conditions for local solvability of some degenerate PDO with ...]{Sufficient conditions for local solvability of some degenerate PDO with complex subprincipal symbol}
\author{Serena Federico}
\email{serena.federico2@unibo.it}
\address{University of Bologna, Piazza di Porta San Donato 5, Bologna, Italy.}
\thanks{{\bf 2010 Mathematics Subject Classification.} Primary 35A01; Secondary 35B45} 
\thanks{{\it Keywords: Local solvability; a priori estimates; degenerate second order operators.} }
\newtheorem{theorem}{Theorem}
\newtheorem{definition}[]{Definition}
\newtheorem{remark}[]{Remark}
\newtheorem{example}[]{Example}
\newcommand{\n}{\| }
\begin{document}
\maketitle

\begin{abstract}
We will show a local solvability result for a class of degenerate second order linear partial differential operators with a complex subprincipal symbol. Due to the form of the operators in the class the subprincipal symbol is invariantly defined and we shall give sufficient conditions for the local solvability to hold involving the real and the imaginary part of the latter. Under suitable conditions we will prove that the class under consideration is $L^2$ to $L^2$ locally solvable.
\end{abstract}

\section{INTRODUCTION}
We consider the local solvability problem for a class of degenerate linear partial differential operators of the form
\begin{equation}
\label{P}
P=\sum_{j=1}^NX_j^*f_jX_j+iX_0+X_{N+1}+a_0,
\end{equation}
where the $X_j=X_j(x,D)$, $0\leq j\leq N+1$, $D=(D_1,...,D_n)=-i(\partial_{x_1},...,\partial_{x_n})$, are homogeneous first order linear partial differential operators with smooth coefficients defined on an open set $\Omega\subseteq\mathbb{R}^n$, $f_j=f_j(x)\in C^\infty(\Omega;\mathbb{R})$ are real smooth functions possibly vanishing at some point of the set $\Omega$ and $a_0\in C^\infty(\Omega;\mathbb{C})$. In addition we consider the operators $X_j$, $0\leq j\leq N+1$, having real coefficients (i.e the $iX_j$, $0\leq j\leq N+1$, are real smooth vector fields),. 

Our goal here is to give sufficient conditions for the local solvability of $P$ to hold at each point of $\Omega$. Of course, the most interesting cases are covered by situations in which we have local solvability at points where the operator $P$ is degenerate. Moreover, we will not suppose that the vector fields (i.e. the operators) $iX_j$, $1\leq j\leq N+1$, are nondegenerate, therefore very complicated situations can arise in the class \eqref{P} above.

The sufficient conditions we are going to introduce will be given in terms of the operators $X_j$ and of the functions $f_j$, and, as we shall see, they will be essentially requirements on the subprincipal part of the operator, which are therefore invariantly defined. 

The study of operators such as \eqref{P} goes back to the work of Kannai \cite{Ka} where an example of hypoelliptic non-solvable operator is given. To be precise, the so called Kannai operator, is not locally solvable around points where the principal symbol changes sign, showing that this property is meaningful for the local solvability.
Some generalizations of this operator have been studied by Colombini, Cordaro and Pernazza in \cite{CCP} and by Colombini, Pernazza and Treves in \cite{CCP1}, and further extensions have been given by the author and A. Parmeggiani in \cite{FP} and \cite{FP1} (see also \cite{Par}).
The form of the operator $P$ in \eqref{P} is mostly linked to that in \cite{FP} and \cite{FP1} and our aim is to cover other open cases. With respect to the class in \cite{FP1}, here we allow the presence of several functions in the second order part of the operator even when $X_{N+1}(x,D)\equiv 0$. Moreover, while in \cite{FP} and \cite{FP1} the condition $iX_0f>0$ near $f^{-1}(0)$, with $f_j= f$ for all $j=1,...,N$, is required, here we allow $iX_0f_j\geq 0$ over $\Omega$, for all $j=1,...,N$. Let us also remark that for \eqref{P} we require $X_0(x,D)\not\equiv 0$ and $X_{N+1}$ possibly such that $X_{N+1}\equiv 0$ or degenerate. The cases $X_0\not\equiv 0$ in presence of a single function $f=f_j$ for all $j$, and the case $X_0\equiv 0, X_{N+1}\not\equiv 0$ with several functions $f_j$ are studied in \cite{FP1}. 

There are other interesting results about operators with double characteristics which are somehow connected to operators in the class \eqref{P}. We recall, for instance, the results of Helffer in \cite{Hel} where he shows the construction of parametrices for some operators with double characteristics and gives, as an application, examples of operators which are also contained in the class \eqref{P}.
We have results in \cite{T} due to Treves where operators of the form $XY+Z$ are considered. Very interesting results for pseudodifferential operators appear in works by Mendoza \cite{M} and Mendoza and Uhlmann \cite{MU, MU2} in which necessary and sufficient conditions for the local solvability of operators having a principal symbol with specific form are given. 
In \cite{KP} Pravda-Starov exhibits some examples of weakly hyperbolic operators which are not locally solvable consistently with the results of Mendoza and Uhlmann.
In this regard we want to highlight that, due to the required conditions, all the partial differential operators contained in the class treated in \cite{MU,MU2} do not satisfy condition $\mathsf{Sub}(\mathscr{P})$ of \cite{MU,MU2} except for dimension 2.  In this case, that is when $n=2$, we can find a deep connection between the sufficient conditions given here for \eqref{P} and those given in \cite{MU2}.

It is also worth to recall recent results due to Dencker in \cite{D} about necessary conditions for the local solvability of pseudodifferential operators of subprincipal type (that essentially have involutive characteristic set).

Finally, other results connected to the argument can be found in \cite{F}, \cite{Mu}, \cite{MR1}, \cite{PP} and \cite{PR}.

The present paper is organized as follows.

In Section 2 we will present the hypotheses that determine the class, recall the general definition of local solvability and prove the solvability result by means of a priori estimate. The key point in the proof will be the use of the Fefferman-Phong inequality on a new operator $P'$ appearing in the estimate.

In Section 3 we will give a few examples of operators in the class whose local solvability is guaranteed by the theorem of Section 2. 

\section{STATEMENT AND PROOF OF THE RESULT}
Recall that we are dealing with the local solvability problem for the class of operators of the form \eqref{P} given in the Introduction.

In what follows we shall assume that conditions (H1), (H2) and (H3) below are satisfied:
\begin{itemize}
  \item [(H1)]  $X_0(x,D)\neq 0$ $\forall x\in\Omega$ and $iX_0f_j\ge0$ on $\Omega$ for all $1\leq j\leq N$;
  \item [(H2)] $[X_0,X_j](x,D)=0$ in $\Omega$ for all $1\leq j\leq N$;
  \item [(H3)] for all $x_0\in\Omega$ there exists $U\subset\Omega$ open and bounded containing $x_0$, and  a positive constant $C$ such that 
   $$|\{X_0,X_{N+1}\}(x,\xi)|^2\leq C\left(\sum_{j=1}^N \bigl(iX_0f_j(x)\bigl)X_j(x,\xi)^2+ X_0(x,\xi)^2\right),\quad \forall (x,\xi)\in U\times \mathbb{R}^n;$$
\end{itemize}
where we denote by $X_j(x,\xi)$ the (total) symbol of the operator $X_j$ and by $\{\cdot,\cdot\}$ the Poisson bracket.
 
\begin{remark}
Observe that the subprincipal symbol of $P$ in \eqref{P} is given by $Sub(P)(x,\xi)=iX_0(x,\xi)+X_{N+1}(x,\xi)$, thus it is invariant on $T^*\Omega$ and not only on the double characteristics set since it is the principal symbol of a first order operator.  Moreover one gets that (H1), (H2) and (H3) are essentially requirements on the real and the imaginary part of $Sub(P)$. In particular the imaginary part has a key role here. As we shall see below, conditions (H1) and (H2) allow a control on the commutator (or the Poisson bracket at the level of symbols) between the principal part and the imaginary subprincipal part, while condition (H3) imposes a relation between the real and the imaginary part of $Sub(P)$. 
Since the operator $P$ is degenerate and has a principal symbol which may change sign, the only chance is to control it by means of the first order part. Here the control is guaranteed by the hypothesis (H1) on $\mathsf{Im}Sub(P)=X_0$. In fact the latter condition gives the validity of a Poincar\'e type inequality for $X_0$ that will be used to absorb the $L^2$-errors coming from the principal part and the term $X_{N+1}$.
\end{remark}

The local solvability result we are going to show is proved by means of a priori estimates. 
Before giving the statement of the result we recall below a general definition of $H^s$ to $H^{s'}$ local solvability for a partial differential operator, where $H^s$ stands for the Sobolev space of order $s$.

\begin{definition}
Let P be a partial differential operator defined on an open set $\Omega \subseteq\mathbb{R}^n$. We say that P is $H^s$ to $H^{s'}$ locally solvable at $x_0\in\Omega$ if there exists a compact $K\subset \Omega$, with $x_0\in \mathring{K}=U$ (where $ \mathring{K}$ is the interior of $K$), such that for all $f\in H^s_{\mathrm{loc}}(\Omega)$ there is $u\in H^{s'}_{\mathrm{loc}}(\Omega)$ which solves $Pu=f$ in $U$. 
\end{definition}
\begin{theorem}\label{Thm}
Let $P$ be the operator in \eqref{P} satisfying conditions (H1), (H2) and (H3). Then, for all $x_0\in\Omega$, $P$ is $L^2$ to $L^2$ locally solvable at $x_0$.
\end{theorem}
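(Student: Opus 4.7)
The plan is to prove Theorem~\ref{Thm} by establishing the dual a priori estimate
\[
\n u\n_{L^2}\le C\,\n P^*u\n_{L^2}\quad\text{for all }u\in C_c^\infty(U),
\]
on a sufficiently small open neighborhood $U\subset\Omega$ of $x_0$, and then invoking the standard Hahn--Banach duality argument to produce, for each $f\in L^2_{\mathrm{loc}}(\Omega)$, an $L^2$ solution of $Pu=f$ in $U$. Since $X_0,\dots,X_{N+1}$ have real PDO-coefficients, the adjoint $P^*$ has the same structural form as $P$ modulo zeroth order perturbations, with the imaginary subprincipal term $iX_0$ replaced by $-iX_0$; it is this ``adjoint'' form on which the commutator manipulations below act.

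The backbone of the argument is the identity obtained by isolating $-iX_0 u$ in $P^*u = P_2 u - iX_0 u + X_{N+1}u + \text{l.o.t.}$, where $P_2:=\sum_j X_j^* f_j X_j$, and pairing with $-iX_0 u$ using $\n (-iX_0)u\n^2 = \n X_0 u\n^2$. The key commutator identity comes from (H2): since $[iX_0,X_j]=0$, a direct symbolic calculation gives
\[
[iX_0,P_2] = \sum_{j=1}^{N} X_j^*(iX_0 f_j)X_j \quad (\text{mod first order}),
\]
which, together with (H1) making $(iX_0 f_j)\ge 0$, yields after a routine integration by parts the bound
\[
\n X_0 u\n^2 + \tfrac{1}{2}\sum_{j=1}^{N}\langle (iX_0 f_j)X_ju,X_ju\rangle \le \n P^*u\n\,\n X_0 u\n + \bigl|\mathrm{Re}\langle X_{N+1}u,-iX_0 u\rangle\bigr| + \text{l.o.t.}
\]
The essential feature is that both non-negative quantities sit on the left hand side.

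The main obstacle is the cross term $\mathrm{Re}\langle X_{N+1}u,-iX_0u\rangle$. After one integration by parts it is controlled, modulo lower order, by $|\langle [iX_0,X_{N+1}]u,u\rangle|$, whose principal symbol is precisely the Poisson bracket $\{X_0,X_{N+1}\}$ singled out in (H3). To convert (H3) into an operator inequality I would introduce the auxiliary second order operator $P'$ defined at the level of Weyl symbols by
\[
\sigma_w(P')(x,\xi) := C\Bigl(\sum_{j=1}^{N}(iX_0 f_j)(x)\,X_j(x,\xi)^2 + X_0(x,\xi)^2\Bigr) - |\{X_0,X_{N+1}\}(x,\xi)|^2,
\]
with $C$ the constant from (H3), so that $\sigma_w(P')\ge 0$ on $U\times\mathbb{R}^n$. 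The \textbf{Fefferman--Phong inequality} applied to $P'$ then gives $\langle P'u,u\rangle\ge -C'\n u\n^2$, which upon unpacking the calculus bounds $\n\mathrm{Op}\{X_0,X_{N+1}\}u\n^2$ by $C\bigl(\sum_j\langle(iX_0 f_j)X_ju,X_ju\rangle + \n X_0u\n^2 + \n u\n^2\bigr)$. Two applications of Cauchy--Schwarz (with a small parameter) then convert this into
\[
|\mathrm{Re}\langle X_{N+1}u,-iX_0 u\rangle| \le \epsilon\Bigl(\n X_0 u\n^2 + \sum_{j=1}^{N}\langle (iX_0 f_j)X_ju,X_ju\rangle\Bigr) + C_\epsilon\,\n u\n^2,
\]
so the cross term can be absorbed into the left hand side of the previous display. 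This invocation of Fefferman--Phong, together with the careful bookkeeping of the lower order remainders produced by the symbolic calculus used to define $P'$, is the part of the proof I expect to be the main technical obstacle.

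To close the estimate I use that $X_0$ does not vanish on $\Omega$ by (H1): shrinking $U$ if necessary, the real vector field $iX_0$ can be locally straightened to $\partial_{x_1}$, giving the Poincar\'e inequality $\n u\n^2\le C_U\n X_0 u\n^2$ on $C_c^\infty(U)$ with $C_U\to 0$ as $\operatorname{diam}(U)\to 0$. Taking $U$ small enough to absorb the residual $C_\epsilon\n u\n^2$ term into $\n X_0 u\n^2$ yields $\n X_0 u\n^2\le C\n P^*u\n^2$, and therefore $\n u\n\le C'\n P^*u\n$ for all $u\in C_c^\infty(U)$. The Hahn--Banach duality argument then produces the desired $L^2$ to $L^2$ local solvability of $P$ at $x_0$.
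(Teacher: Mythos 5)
Your proposal is correct and follows essentially the same route as the paper: pairing $P^*u$ with $-iX_0u$, using (H2) to reduce the commutator of $iX_0$ with the second-order part to $\sum_j X_j^*(iX_0f_j)X_j$ modulo controllable remainders, invoking the Fefferman--Phong inequality on an auxiliary operator whose Weyl symbol is nonnegative (up to a bounded term) by (H1) and (H3) to tame the $\{X_0,X_{N+1}\}$ cross term, and closing with a Poincar\'e inequality for the nonvanishing field $X_0$ on a shrinking neighborhood. The only cosmetic difference is that the paper bundles $X_0^2$ and $-\delta_1[X_0,X_{N+1}]^*[X_0,X_{N+1}]$ directly into the single operator $P'$ and applies Fefferman--Phong once, whereas you extract an operator bound on $\|\mathrm{Op}(\{X_0,X_{N+1}\})u\|^2$ first and then absorb by Cauchy--Schwarz; the Weyl-calculus bookkeeping you flag as the main technical obstacle (checking that the remainders are genuinely of order zero, not one, in the quadratic form) is indeed where the paper spends its effort.
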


In order to get the result in Theorem \ref{Thm} it suffices to prove the following a priori estimate that we shall call \textit{solvability estimate}: \textit{For all $x_0\in \Omega$ there exists a compact set $K\subset\Omega$ containing $x_0$ in its interior $\mathring{K}=U$ and a positive constant $C$ such that, for all $\varphi\in C_0^\infty(U)$},
\begin{equation}\label{solvest}
\n P^* \varphi \n\geq C\n \varphi\n,
\end{equation}
\textit{with $\n \cdot \n$ denoting the $L^2$-norm and $P^*$ the adjoint of $P$.}

In view of the well-known equivalence between $H^s$ to $H^{s'}$ local solvability and validity of suitable a priori estimates (see, for instance, \cite{L}), the proof of the theorem is mainly concerned in obtaining the inequality above for the operator $P^*$, giving as a consequence the local solvability of $P$ in the sense $L^2$ to $L^2$.
\begin{proof}[Proof of Theorem \ref{Thm}]
First note that, since $X_0$ is nondegenerate in $\Omega$, we can always find a change of coordinates such that $X_0(x,D)=D_1$. Note also that conditions (H1), (H2) and (H3) are still satisfied, since they are invariant under changes of coordinates. Therefore let us assume $X_0(x,D)=D_1$, and observe that $X_0^*=X_0$ and $X_{j}^*=X_{j}+d_{j}$, where $d_{j}=-i\mathsf{div}(iX_j)$.

We now pick an arbitrary point $x_0\in \Omega$ and start the proof of the solvability inequality by estimating the term
\begin{equation}
\begin{gathered}
2\mathsf{Re}(P^*\varphi, -i\underbrace{X_0^*}_{=X_0}\varphi)= 2\sum_{j=1}^N\mathsf{Re}(X_j^*f_jX_j\varphi, -iX_0^*\varphi)+2\mathsf{Re}(-iX_0^*\varphi, -iX_0^*\varphi)\\
+2\mathsf{Re}(X_{N+1}\varphi, -iX_0^*\varphi)+2\mathsf{Re}(\overline{a_0}\,\varphi, -iX_0^*\varphi)\\
\geq\underbrace{\sum_{j=1}^N2\mathsf{Re}(X_j^*f_jX_j\varphi, -iX_0^*\varphi)}_{(\ref{est1}.1)}+2\n X_0\varphi\n^2\\
\label{est1}
+\underbrace{2\mathsf{Re}(X_{N+1}\varphi, -iX_0\varphi)}_{(\ref{est1}.2)}-\frac {1}{\delta_0}\n\overline{a_0}\n^2_{L^\infty(K)}\n\varphi\n^2-\delta_0\n X_0\varphi\n^2,
\end{gathered}
\end{equation}
for all $\varphi \in C_0^\infty(K)$, where $K$ is a compact set in $\Omega$ containing $x_0$ in its interior and $\delta_0$ is a positive constant that will be chosen later. We then consider the terms $(\ref{est1}.1)$ and $(\ref{est1}.2)$ separately.

For the term $(\ref{est1}.1)$ we have that, for all $\varphi\in C_0^\infty(K)$,
\begin{equation} \label{term1}
\begin{gathered}
\sum_{j=1}^N2\mathsf{Re}(X_j^*f_jX_j\varphi, -iX_0\varphi)=\sum_{j=1}^N\left[(X_j^*f_jX_j\varphi, -iX_0\varphi)+(-iX_0\varphi,X_j^*f_jX_j\varphi)\right]\\
=\sum_{j=1}^N\left[(iX_0X_j^*f_jX_j\varphi, \varphi)+(-iX_0\varphi,X_j^*f_jX_j\varphi)\right]\\
=\sum_{j=1}^N([iX_0,X_j^*f_jX_j]\varphi,\varphi).
\end{gathered}
\end{equation}

For the term $(\ref{est1}.2)$ we have, for all $\varphi\in C_0^\infty(K)$,
\begin{equation*}
\begin{gathered}
2\mathsf{Re}(X_{N+1}\varphi, -iX_0\varphi)=(X_{N+1}\varphi, -iX_0\varphi)+(-iX_0\varphi,X_{N+1}\varphi)\\
=([iX_0,X_{N+1}]\varphi,\varphi)+(iX_0\varphi,X_{N+1}^*\varphi)-(iX_0\varphi,X_{N+1}\varphi)\\
=([iX_0,X_{N+1}]\varphi,\varphi)+(iX_0\varphi,d_{N+1}\varphi),
\end{gathered}
\end{equation*}
whence,
\begin{equation}\label{term2}
\begin{gathered}
2\mathsf{Re}(X_{N+1}\varphi, -iX_0\varphi)
=\mathsf{Re}\bigl(([iX_0,X_{N+1}]\varphi,\varphi)\bigl)+\mathsf{Re}\bigl((iX_0\varphi,d_{N+1}\varphi)\bigl)\\
\geq -\delta_1\n[X_0,X_{N+1}]\varphi\n^2-\frac{1}{\delta_1}\n\varphi\n^2 -\delta_2\n X_0\varphi\n^2-\frac{1}{\delta_2}\n d_{N+1}\n^2_{L^\infty(K)}\n\varphi\n^2,
\end{gathered}
\end{equation}
where $\delta_1$ and $\delta_2$ are two positive constants that will be chosen later.

Therefore, by \eqref{term1} and \eqref{term2}, we get
\begin{equation}
\label{est2}
\begin{gathered}
2\mathsf{Re}(P^*\varphi, -iX_0^*\varphi)\geq \sum_{j=1}^N([iX_0,X_j^*f_jX_j]\varphi,\varphi)+\n X_0\varphi\n^2\\
-\delta_1\n[X_0,X_{N+1}]\varphi\n^2+(1-\delta_0-\delta_2)\n X_0\varphi\n^2-\left[\frac{1}{\delta_2}\n d_{N+1}\n^2_{L^\infty(K)}+\frac {1}{\delta_0}\n\overline{a_0}\n^2_{L^\infty(K)}+\frac{1}{\delta_1}\right] \n\varphi\n^2.
\end{gathered}
\end{equation}
We now write $\n X_0\varphi\n^2=(X_0 \varphi,X_0\varphi)=(X_0^2\varphi,\varphi)$ and, similarily, $\n[X_0,X_{N+1}]\varphi\n^2=([X_0,X_{N+1}]^*[X_0,X_{N+1}]\varphi,\varphi)$, so that the first three terms on the righthand side of \eqref{est2} can be written as
\begin{equation}
\label{P'}
\Big((\sum_{j=1}^N[iX_0,X_j^*f_jX_j]+X_0^2-\delta_1[X_0,X_{N+1}]^*[X_0,X_{N+1}])\varphi,\varphi\Big):=(P'\varphi,\varphi),
\end{equation}
and 
\begin{equation}
\label{ReP}
\begin{gathered}
2\mathsf{Re}(P^*\varphi, -i\underbrace{X_0}_{=X_0^*}\varphi)\geq(P'\varphi,\varphi)+(1-\delta_0-\delta_2)\n X_0\varphi\n^2\\
-\left[\frac{1}{\delta_2}\n d_{N+1}\n^2_{L^\infty(K)}+\frac {1}{\delta_0}\n\overline{a_0}\n^2_{L^\infty(K)}+\frac{1}{\delta_1}\right] \n\varphi\n^2.
\end{gathered}
\end{equation}
Our next goal is to prove that the operator $P'$ given in \eqref{P'}, that is $P'=\sum_{j=1}^N[iX_0,X_j^*f_jX_j]+X_0^2-\delta_1[X_0,X_{N+1}]^*[X_0,X_{N+1}]$, satisfies the Fefferman-Phong inequality in a compact set of $\Omega$ containing $x_0$ in its interior. This will give a control on the term \eqref{P'}, a control being necessary in order to get the solvability estimate \eqref{solvest}. 

To prove the inequality for $P'$ we shall proceed as follows: we will define an operator $A$ which extends $P'$ globally and prove the Fefferman-Phong inequality for the latter. As a consequence we will get the same result for $P'$ in a suitable compact set containing $x_0$ in its interior.

Let us consider a sequence of compact sets $K_0\subseteq K_0' \Subset K_1\subseteq K_1'\subset \Omega$ such that $K_0$ contains $x_0$ in its interior and condition (H3) is satisfied in $K_1'$. Let $\chi_0$ and $\chi_1$ be two functions such that $\chi_\ell\in C_0^\infty(K_\ell')$, $\chi_\ell\equiv 1$ in $K_\ell$ and $0\leq \chi_\ell\leq 1$ in $K_\ell'$ for $\ell=0,1$, and take the operators $\tilde{Y}$, $\tilde{X}_0$, $\tilde{X}_j$, $1\leq j\leq N$, of the form $\tilde{Y}(x,D)=\chi_1(x)[X_0,X_{N+1}](x,D)$, $\tilde{X_0}(x,D)=\chi_1(x)X_0(x,D)$ and $\tilde{X}_j(x,D)=\chi_1(x)X_j(x,D)$ for $1\leq j\leq N$ respectively. We now define the operator $A$ as the Weyl quantization of the symbol $a(x,\xi)\in S^2(\mathbb{R}^n\times\mathbb{R}^n)$ given by

$$ a(x,\xi)=\chi_0\Big(\sum_{j=1}^N\frac1 i\{ip_0,\overline{p}_j\#f_j\#p_j\}+p_0\#p_0-\delta_1\overline{q}\#q\Big),$$
with $q(x,\xi)$ and $p_j(x,\xi)$, $0\leq j\leq N$, denoting the Weyl symbols of the operators $\tilde{Y}$ and $\tilde{X}_j$ respectively, and compute the symbol $a$ (which is such that $a|_{\pi^{-1}(K_0)}=p'|_{\pi^{-1}(K_0)}$ with $p'$ Weyl symbol of $P'$) by means of the Weyl calculus of pseudo-differential operators.

Since $p_j(x,\xi)=p^1_j(x,\xi)+ip_j^0(x,\xi)$, with $p_j^1(x,\xi)=\chi_1(x)X_j(x,\xi)\in S^1(\mathbb{R}^n\times\mathbb{R}^n)$ and $p_j^0(x,\xi)=p_j^0(x)\in S^0(\mathbb{R}^n\times\mathbb{R}^n)$ (with $p_0^0(x)=0$), we have 
\begin{gather*}
\overline{p}_j\#f_j\#p_j=\overline{p}_j\#(f_jp_j+\frac {1}{2 i} \{f_j,p_j\})\\
=f_j\overline{p}_jp_j+\frac {1}{2 i} \{\overline{p}_j,f_jp_j \}+\frac {1}{2 i} \overline{p}_j\{f_j,p_j\}+\frac{1}{2 i}\big\{ \overline{p}_j,\frac {1}{2 i} \{f_j,p_j\} \big\}\\
=f_j|p_j|^2+\frac {1}{2 i} \{\overline{p}_j,f_j\}p_j +\frac {1}{2 i} f_j\{\overline{p}_j,p_j \}+\frac {1}{2 i} \overline{p}_j\{f_j,p_j\}+\frac {1}{2 i}\big\{ \overline{p}_j,\frac {1}{2 i} \{f_j,p_j\} \big\},\\
=f_j|p_j|^2+\frac {1}{2 i} \{\overline{p}_j,f_j\}p_j +\frac {1}{2 i} \overline{p}_j\{f_j,p_j\}+r_0\\
=f_j|p_j|^2+\frac {1}{2 i} \{p^1_j-ip_j^0,f_j\}(p^1_j+ip_j^0) +\frac {1}{2 i} (p^1_j-ip_j^0)\{f_j,p^1_j+ip_j^0\}+r_0\\
\underset{(\{f_j,p_j^0\}=0)}{=}f_j((p^1_j)^2+(p^0_j)^2)+r_0\\
=f_j({p^1_j})^2+r_0,
\end{gather*}
where we denoted by $r_0=r_0(x)$ a (new) smooth compactly supported function with support in $\Omega$.
We then have
\begin{gather*}
\chi_0\sum_{j=1}^N\frac1 i\{ip_0,\overline{p}_j\#f_j\#p_j\}=\chi_0\sum_{j=1}^N\{p_0,f_j(p^1_j)^2\}+\chi_0r_0\\
=\chi_0\sum_{j=1}^N \big(\{p_0,f_j\}(p^1_j)^2+\underbrace{\chi_0f_j\{p_0,(p^1_j)^2\}}_{=0}+\chi_0r_0,
\end{gather*}
where $\chi_0\,f_j\{p_0,(p^1_j)^2\}=0$ since (by condition (H2), that is $\{X_0,X_j\}(x,\xi)=0$) we have
$$\chi_0\,f_j\{p_0,(p^1_j)^2\}(x,\xi)= 2\chi_0\sum_{j=1}^N f_j \Big( \chi_1 X_j\{X_0,\chi_1\}+\chi_1 X_0\{\chi_1,X_j\}\Big)\chi_1X_j,$$
(with $X_j=X_j(x,\xi)$ symbols of $X_j(x,D)$), and therefore, since $\mathrm{supp}\,\chi_0\bigcap \mathrm{supp} \{X_j,\chi_1\}=\emptyset$, $j=0,1,$ we get that the quantity above is identically zero.

Then, as $p_0\#p_0=p_0^2$ and $\overline{q}\#q=q^2+r_0$, we have that
\begin{equation}
\label{FP}
a(x,\xi)=\chi_0\big(\sum_{j=1}^N \{p_0,f_j\}(p_j^1)^2+p_0^2-\delta_1\,q^2+r_0\big)\end{equation}
$$=\chi_0(x)^2\left(\sum_{j=1}^N \{X_0,f_j\}(x)X_j(x,\xi)^2+X_0(x,\xi)^2-\delta_1\{X_0,X_{N+1}\}^2(x,\xi)+r_0\right),$$
whence, by choosing $\delta_1$ sufficiently small and using hypotheses (H1) and (H3) (which is satisfied in $K_0'$), we have from \eqref{FP} that  there exists a positive constant $c$ such $a(x,\xi)\geq -c$ and hence $A$ satisfies the Fefferman-Phong inequality.
Finally, since $(A\varphi, \varphi)=(P'\varphi, \varphi)$ for all $\varphi \in C_0^{\infty}(K_0)$, we conclude that $P'$ satisfies the Fefferman-Phong inequality on $K_0$, that is, explicitly, there exists a positive constant $C$ such that, for all $\varphi \in C_0^\infty (K_0)$, $(P'\varphi,\varphi)\geq -C\n \varphi\n^2$.

Now, denoting by $K$ the compact containing $x_0$ in its interior where the Fefferman-Phong inequality for $P'$ holds, we have that \eqref{est2} is satisfied for all $\varphi\in C_0^\infty(K)$ (note that \eqref{est2} holds on each compact in $\Omega$) and we have from \eqref{ReP}
\begin{equation*}
2\mathsf{Re}(P^*\varphi, -i\underbrace{X_0}_{=X_0^*}\varphi)\geq (1-\delta_0-\delta_2)\n X_0 u\n^2 -\left[\frac{1}{\delta_2}\n d_{N+1}\n^2_{L^\infty(K)}+\frac {1}{\delta_0}\n\overline{a_0}\n^2_{L^\infty(K)}+\frac{1}{\delta_1}+C\right]  \n\varphi\n^2,
\end{equation*}
where $\delta_1$ is fixed here in order to have the Fefferman-Phong inequality for $P'$ in $K$ whereas $C$ is the related constant.\\
Since $2\mathsf{Re}(P^*\varphi, -iX_0^*\varphi)\leq \delta_3\n X_0\varphi\n^2+\frac{1}{\delta_3}\n P^*\varphi\n^2$, then we find
$$ \frac{1}{\delta_3}\n P^*\varphi\n^2 \geq (1-\delta_0-\delta_2-\delta_3)\n X_0 u\n^2 - C(K,\delta_0,\delta_2)\n \varphi\n^2.$$
We next choose $\delta_j$, $j=0,2,3$, sufficiently small so that $(1-\delta_0-\delta_2-\delta_3)\geq 1/2,$   and get
$$\n P^*\varphi\n^2\geq C_1 \n X_0\varphi\n^2-C_2 \n \varphi\n^2,$$
where the constants $C_1$ and $C_2$ are fixed now.
Finally, by applying a Poincar\'e inequality on $X_0$ (which is nondegenerate), and possibly by shrinking the compact $K$ around $x_0$ to a compact that we keep denoting by $K$, we have that there exists a positive constant $C$ such that, for all $\varphi\in C_0^\infty(K)$, one has
$$\n P^*\varphi\n^2\geq C \n \varphi\n^2,$$
which is the solvability estimate we were looking for. This concludes the proof.
\end{proof}

\begin{remark}\label{FinalRmk}
In the proof of Theorem \ref{Thm} we exploited conditions (H1), (H2) and (H3) in order to have that the symbol $p'(x,\xi)$ (to be precise its global extension $a(x,\xi)$) satisfies the hypothesis needed to apply the Fefferman-Phong inequality on $P'$.  The latter inequality applied in \eqref{ReP}, together with a Poincar\'e inequality on $X_0$ whose validity is granted by condition (H1) (i.e. $X_0(x,D)$ is nondegenerate in $\Omega$), gives the control of the second order quantity $(P'\varphi,\varphi)$ and the cancellation of $L^2$-errors. This means that conditions (H1), (H2) and (H3) are sufficient to get the solvability result for $P$ in $\Omega$. However, these hypothesis, are not even necessary for the $L^2$ to $L^2$ local solvability to hold. In fact, if no conditions are imposed on the vector fields involved in the expression of $P$, then $a(x,\xi)$ is of the form
\begin{equation}\label{FP1}
\begin{gathered}
a(x,\xi)=\chi_0^2\Big(\sum_{j=1}^N\left( \{p_0,f_j\}(p_j^1)^2+2f_j\{p_0,p_j^1\}p_j^1\right)+p_0^2-\delta_1q^2+r_0\Big)\\
=\chi_0^2\left(\sum_{j=1}^N \{X_0,f_j\}X_j^2+2f_j\{X_0,X_j\}+X_0^2-\delta_1\{X_0,X_{N+1}\}^2+r_0\right).
\end{gathered}
\end{equation}
Hence, if one can have the Fefferman-Phong inequality for $A$ (global extension of $P'$) being the Weyl quantization of the quantity above, then, still requiring the nondegeneracy of $X_0$, one can find the solvability result of Theorem \ref{Thm} by the same previous technique.

Note  finally that, when stronger conditions are satisfied, that is, when $P'$ is such that it satisfies the G\aa rding, the Melin or the Rothschild-Stein sharp subelliptic inequality, then one can get (starting from \eqref{ReP}) a better local solvability result, meaning that one can have $H^s$ to $H^{s'}$ local solvability with $(s,s')=(-1,0)$, $(s,s')=(-1/2,0)$ and $(s,s')=(-1/r,0)$, with $r>1$ integer, respectively. In fact we have that there exists a compact $K\subset\Omega$ containing $x_0$ in its interior and a positive constant $C$ such that $(P'\varphi, \varphi)\geq C \n \varphi \n_{-s}$, with $-s=1,1/2, 1/r$, if $P'$ satisfies the G\aa rding, the Melin or the Rothschild-Stein inequality respectively. We then have from \eqref{ReP}

$$ \delta_3\n X_0\varphi\n^2+\frac{1}{\delta_3}\n P^*\varphi\n^2\geq 2\mathsf{Re}(P^*\varphi, -iX_0\varphi)\geq C\n \varphi\n^2_{-s}+(1-\delta_0-\delta_2)\n X_0\varphi\n^2$$
$$-\left[\frac{1}{\delta_2}\n d_{N+1}\n^2_{L^\infty(K)}+\frac {1}{\delta_0}\n\overline{a_0}\n^2_{L^\infty(K)}+\frac{1}{\delta_1}\right] \n\varphi\n^2,$$
hence, as before, by suitably choosing the constants $\delta_j$, $j=0,1, 2,3,$ and absorbing the $L^2$-errors with the $-s$ norm one obtains $H^{s}$ to $H^0$ local solvability.
\end{remark}

\section{EXAMPLES OF OPERATORS IN THE CLASS}
In this section we shall give some examples of operators in the class \eqref{P}.
\begin{example}
Consider the operator defined in $\mathbb{R}^n$, $n\geq 2$, of the form
\begin{gather*}P=x_1(D_1^2-D_2^2)+i(D_1+D_2)+X(x,D)\\
=D_1x_1D_1-D_2x_1D_2+iD_2+X(x,D),
\end{gather*}
where $X(x,D)$ is a first order homogenous partial differential operator with real smooth coefficients of the form
$$X(x,D)=g_1(x_1)D_1+g_2(x_1,x_2)D_2$$
when $n=2$, with $g_1\in C^\infty (\mathbb{R})$, $g_2\in C^\infty (\mathbb{R}^2)$,  and of the form
$$X(x,D)=g_1(x_1,x_3,..,x_n)D_1+g_2(x)D_2+\sum_{j=3}^ng_j(x_1,x_3,...,x_n)D_j,$$
when $n\geq3$, with $g_2\in C^\infty (\mathbb{R}^n)$ and $g_j\in C^\infty(\mathbb{R}^{n-1};\mathbb{R})$.\\
It is easy to check that conditions (H1), (H2) and (H3) are satisfied by $P$, hence, by the theorem above, we have $L^2$ to $L^2$ local solvability for $P$ at each point of $\mathbb{R}^n$, $n\geq 2$. 
\end{example}

\begin{example}
Consider the operator defined in $\mathbb{R}^{n+1}$ of the form
$$P=\sum_{j=1}^k D_j x_j^p D_j\pm\sum_{j'=k+1}^nD_{j'} x_{j'}^{p'} D_{j'}+if(t)D_t+\sum_{\ell=1}^ng_\ell(x)D_\ell,$$
where $D_j=D_{x_j}$, $k\leq n$ is a positive integer, $p,p'\in \mathbb{N}$ and $g_\ell$ and $f$  are real smooth functions with $f$ nonvanishing. Again one has that conditions (H1), (H2) and (H3) are satisfied, therefore $P$ is $L^2$ to $L^2$ locally solvable in $\mathbb{R}^{n+1}$.
\end{example}

\begin{example}
Let $n+1\geq 3$ and 
$$X_1(x,t,D_x,D_t)=g(x_1)D_1+D_2;\quad X_2(x,t,D_x,D_t)=D_t+h(x)D_2; $$
$$X_3(x,t,D_x, D_t)=\sum_{j=1}^n k_j(x)D_{j}+k_{n+1}(x,t)D_t;\quad X_0(x,t,D_x, D_t)=D_t,$$
where $D_j=D_{x_j}$, $g\in C^\infty(\mathbb{R};\mathbb{R})$, $h\in C^\infty(\mathbb{R}^n;\mathbb{R}), k_i \in C^\infty(\mathbb{R}^n;\mathbb{R}),$ for all $i=1,...,n,$ and $k_{n+1}\in C^\infty(\mathbb{R}^{n+1};\mathbb{R})$. Let $P$ be
$$P= X_1^* t^{2p+1}X_1+X_2^* t^{2p'+1}X_2+iX_0+X_3,$$
with $p$ and $p'$ positive integers. Then, again, the hypotheses of Theorem \ref{Thm} apply and we get $L^2$ to $L^2$ local solvability for $P$ at each point of $\mathbb{R}^{n+1}$. Note that in this example the vector fields $X_1$ and $X_2$ do not form an involutive distribution. This shows that the class \eqref{P} generalizes the class of mixed type operators in \cite{FP1} where the presence of functions $f_j=f$, for all $1\leq j\leq N$, is required and a strict sign condition of the form $iX_0(x,D)f>0$ on $f^{-1}(0)$ is needed. Moreover, with respect to the class of Shr\"{o}dinger type operators in \cite{FP1} where, instead, the presence of several functions $f_j$ is allowed and $X_0, X_{N+1}$ are assumed to be such that $X_0\equiv 0$ and $X_{N+1}\not\equiv 0$, here we do not require any involutive structure of the vector fields $X_j$, $1\leq j\leq N$, whereas in \cite{FP1} an involutivity property is considered.
\end{example}

\begin{example}
This last example is to show that there are cases in which we can have a better kind of local solvability for operators in the class $\eqref{P}$.

Consider in $\mathbb{R}^2$ the operator
$$P=D_1x_1D_1-D_2x_2D_2+i(D_1-D_2)+x_2D_1$$
$$=x_1D_1^2-x_2D_2^2+x_2D_1,$$
where 
$$X_1(x,D)=D_1,\,\, X_2(x,D)=D_2,\,\, X_0(x,D)=D_1-D_2,\,\, X_3(x,D)=x_2D_1,$$
$$ f_1(x)=x_1,\,\,f_2(x)=-x_2,$$
and note that conditions (H1), (H2) and (H3) are satisfied. Moreover $P'$ is such that its Weyl symbol is given by (see \eqref{P'} and \eqref{FP1})
$$p'(x,\xi)=\xi_1^2+\xi_2^2+(\xi_1-\xi_2)^2-\delta_1\xi_1^2,$$
whence (by choosing $\delta_1$ sufficiently small) $P'$ satisfies the G\aa rding inequality and by Remark \ref{FinalRmk} we have that $P$ is  $H^{-1}$ to $L^2$ locally solvable at each point of $\mathbb{R}^2$.

To conclude we just want to say that the same result as before holds for the operator
$$P=D_12x_1D_1-D_22x_2D_2+i(D_1-D_2)+x_2D_1$$
$$=x_1D_1^2-x_2D_2^2-i(D_1-D_2)+x_2D_1,$$
whose symbol $p'$ is
$$p'(x,\xi)=2\xi_1^2+2\xi_2^2+(\xi_1-\xi_2)^2-\delta_1\xi_1^2.$$
\end{example}


\end{document}